\documentclass[letterpaper, 10 pt, conference]{ieeeconf}
\IEEEoverridecommandlockouts       % This command is only needed if you want to use the \thanks command
\overrideIEEEmargins
\usepackage{hyperref}

\usepackage{graphicx}
\usepackage{amsmath,mathtools} %EPS Graphics
\usepackage{amssymb, amsmath}
\usepackage{caption}
\usepackage{subfig}
\usepackage{tikz}
\usetikzlibrary{positioning}
\usetikzlibrary{shapes,arrows}
\captionsetup{justification=centering}

\usepackage{epstopdf}
%\everymath{\displaystyle}

\newcommand{\lls}{\langle\langle}
\newcommand{\ggs}{\rangle\rangle}

\newtheorem{theorem}{Theorem}

\newtheorem{assumption}{Assumption}

\title{Attitude and Gyro Bias Estimation Using GPS and IMU Measurements}
\author{Soulaimane Berkane and Abdelhamid Tayebi% <-this % stops a space
\thanks{This work was supported by the National Sciences and Engineering Research Council of Canada (NSERC). S. Berkane ({\tt\small sberkane@uwo.ca}) and A. Tayebi ({\tt\small atayebi@lakeheadu.ca}) are both with the Department of Electrical and Computer Engineering, University of Western Ontario, London, Ontario, Canada. A. Tayebi is also with the Department of Electrical Engineering, Lakehead University, Thunder Bay, Ontario, Canada.
}}
\begin{document}
\maketitle
\begin{abstract}
We propose an attitude and gyro-bias estimation scheme for accelerated rigid body systems using an inertial measurement unit (IMU) and a global positioning system (GPS). The proposed scheme allows to obtain attitude estimates directly on the Special Orthogonal group $SO(3)$ while estimating the gyro bias and the unknown apparent acceleration of the vehicle. We prove semi-global exponential stability of the estimation errors. Furthermore, a new switching technique for the attitude state is introduced which results in a velocity-aided hybrid attitude observer with proven global exponential stability.

\end{abstract}
%%%%%%%%%%%%%%%%%%%%%%%%%%%%%%%%%%%%%%%%%%%%%%%%%%%%%%%%%%%%%%%%
%%%%%%%%%%%%%%%%%%%%%%%%%%%%%%%%%%%%%%%%%%%%%%%%%%%%%%%%%%%%%%%%%%
\section{Introduction}
There has been a growing interest in the last decade for the control of Unmanned Aerial Vehicles (UAV), Unmanned Underwater Vehicles (UUV) and other vehicles that operate without human occupant. Attitude information needs to be extracted from on-board sensors for these vehicles to be operated. When cost and size of the vehicle is important, low-cost and small-size sensors are often used in Inertial Navigation Systems (INS). A typical IMU device contains an accelerometer, a gyroscope and a magnetometer. Accelerometers provide body-frame measurements of the apparent acceleration (all non-gravitational forces per unit mass). Gyroscopes provide the $3$-axis body-frame angular rate while magnetometers measure the earth magnetic field which is assumed constant and known in the inertial frame. Attitude information can be extracted from the IMU-based vector observations (accelerometer and magnetometer measurements) by assuming ``negligible" acceleration of the vehicle using static attitude reconstruction \cite{Markley1988} or more advanced complementary filtering techniques where the gyroscopic measurements are used to complement the vector measurements \cite{tayebi2006attitude,Martin2007,Mahony2008}.

When considering applications with vehicles subject to important accelerations, the above mentioned attitude estimation schemes fail due to the fact that the accelerometer does no longer measures a known inertial vector. To cope with the problem of unknown inertial-frame acceleration, IMU measurements are often complemented with linear velocity measurements. The inertial-frame linear velocity can be, for instance, differentiated to obtain the inertial-frame acceleration which will be then used along with the body-frame acceleration (obtained from the accelerometer) to obtain an attitude information. However, this ad-hoc method is not very desirable in practice due to noise amplification inherited from the approximate derivative operation. A nonlinear \textit{velocity-aided} observer has been proposed in \cite{Martin2008} to estimate both the orientation of the vehicle and the gyro bias vector with local convergence and stability. Following similar lines, velocity-aided attitude estimators have been proposed in \cite{Hua2010,andrew2011velocity-aided} gyro-bias-free case. These observers have the strong property of providing ``meaningful" attitudes on $SO(3)$ with semi-global exponential stability. A velocity-aided attitude and gyro bias estimator has been proposed in \cite{Grip2012} with global exponential stability by relaxing the attitude estimates to lie outside $SO(3)$. Note that in the above mentioned papers, the IMU measurements are complemented with an inertial frame velocity measurements (such as those obtained from a GPS receiver) where some other existing works have considered linear velocity measurements in the body-fixed frame (such as those obtained from Airspeed or Doppler Velocity Log (DVL) sensors) \cite{Bonnabel2006,Bonnabel2008,Dukan2013,Troni2013,Allibert2016,Hua2016}. This paper addresses the attitude estimation problem using IMU and inertial-frame velocity measurements.

The design of a velocity-aided attitude observer on $SO(3)$ with gyro bias estimation and a proven large domain of convergence and stability remains an open problem. In practice, an ad-hoc method to estimate the gyro-bias vector is to add a simple adaptation law (integral action) to the velocity-aided observer of \cite{Hua2010} or \cite{andrew2011velocity-aided} similar to the one which has been used in \cite{Mahony2008} in the case of constant and known inertial vectors. However, no stability proof has been derived until now due to: \textit{i}) the complexity of the proof (even gyro-free case) used in \cite{Hua2010} although albeit relaxed in \cite{andrew2011velocity-aided} and \textit{ii)} the fact that both assumptions of \textit{known} and \textit{constant} inertial vectors used in \cite{Mahony2008} do not hold since the accelerometer measures and \textit{unknown} and \textit{time-varying} inertial vector. In this paper, despite the aforementioned difficulties, we build a velocity-aided attitude observer on $SO(3)$ with gyro bias estimation by augmenting the observer of \cite{andrew2011velocity-aided} with a projection-based adaptive estimation law for the gyro-bias vector. We prove that the proposed attitude observer guarantees semi-global exponential stability. Thereafter, we endorse the proposed attitude estimation scheme with a new hybrid switching mechanism that jumps the attitude states to the region of exponential stability. The velocity-aided hybrid attitude observer is proven to be globally exponentially stable using a rigorous Lyapunov-based proof for hybrid systems.

%%%%%%%%%%%%%%%%%%%%%%
\section{Background and Preliminaries}\label{sec2}
Throughout the paper, we use $\mathbb{R}$ and $\mathbb{R}_+$ to denote, respectively, the sets of real and nonnegative real numbers. The Euclidean norm of $x\in\mathbb{R}^n$ is defined as $\|x\|=\sqrt{x^\top x}$. For matrices $A, B\in\mathbb{R}^{m\times n}$, their inner product is defined as $\lls A,B\ggs=\textrm{tr}(A^{\top}B)$ and the Frobenius norm of $A$ is $\|A\|_F=\sqrt{\lls A,A\ggs}$. For a square matrix $A\in\mathbb{R}^{n\times n}$, we denote by $\lambda_i^A, \lambda_{\mathrm{min}}^A$, and $\lambda_{\mathrm{max}}^A$ the $i$th, minimum, and maximum eigenvalue of $A$, respectively.
\\
The rigid body attitude evolves on $SO(3) := \{ R \in \mathbb{R}^{3\times 3}|\; \mathrm{det}(R)=1,\; RR^{\top}= I \}$, where $I$ is the three-dimensional identity matrix and $R\in SO(3)$ is called a \textit{rotation matrix}. The group $SO(3)$ has a compact manifold structure with its \textit{tangent spaces} being identified by $T_RSO(3):=\left\{R\Omega\mid\Omega\in\mathfrak{so}(3)\right\},$ where the \textit{Lie algebra} of $SO(3)$, denoted by $\mathfrak{so}(3):=\left\{\Omega\in\mathbb{R}^{3\times 3}\mid\;\Omega^{\top}=-\Omega\right\}$, is the vector space of 3-by-3 skew-symmetric matrices. The map $[\cdot]_\times: \mathbb{R}^3\to\mathfrak{so}(3)$ is defined such that $[x]_\times y=x\times y$, for any $x, y\in\mathbb{R}^3$, where $\times$ is the vector cross-product on $\mathbb{R}^3$. Let $\mathrm{vex}:\mathfrak{so}(3)\to\mathbb{R}^3$ denote the inverse isomorphism of the map $[\cdot]_\times$, such that $\mathrm{vex}([\omega]_\times)=\omega,$ for all $\omega\in\mathbb{R}^3$ and $[\mathrm{vex}(\Omega)]_\times=\Omega,$ for all $\Omega\in\mathfrak{so}(3)$. Defining $\mathbb{P}_a:\mathbb{R}^{3\times 3}\to\mathfrak{so}(3)$ as the projection map on the Lie algebra $\mathfrak{so}(3)$ such that $ \mathbb{P}_a(A):=(A-A^{\top})/2$, we can extend the definition of $\mathrm{vex}$ to $\mathbb{R}^{3\times 3}$ by taking the composition map $\psi := \mathrm{vex}\circ \mathbb{P}_a$ %$:\mathbb{R}^{3\times 3}\to\mathbb{R}^3$
        such that, for a $3$-by-$3$ matrix $A:=[a_{ij}]_{i,j = 1,2,3}$, we have
        \begin{equation}\label{psi}
        \psi(A):=\mathrm{vex}\left(\mathbb{P}_a(A)\right)=\frac{1}{2}\left[\begin{array}{c}
        a_{32}-a_{23}\\a_{13}-a_{31}\\a_{21}-a_{12}
        \end{array}
        \right].
        \end{equation}
        Let $R\in SO(3)$ be a rotation matrix, and let $|R|_I\in[0, 1]$ be the normalized Euclidean distance on $SO(3)$ which is given by
     $
        |R|_I^2:=\frac{1}{8}\|I-R\|_F^2=\frac{1}{4}\mathrm{tr}(I-R).
  $
             %%%%%%%%%%%%%%%%%%%%%%%%%%%%%%%%%%%%%%%%%%%%%%
An element $R\in SO(3)$ can be represented as a rotation of angle $\theta\in\mathbb{R}$ around a unit vector axis $u\in\mathbb{S}^2$ using the map $\mathcal{R}_a:\mathbb{R}\times\mathbb{S}^2\to SO(3)$:
\begin{equation}\label{Ra}
\mathcal{R}_a(\theta,u):=e^{\theta[u]_\times}=I+\sin(\theta)[u]_\times+(1-\cos\theta)[u]_\times^2,
\end{equation}
where $e^A$ denotes the matrix exponential of $A$. In this paper, we make use of the framework for dynamical hybrid systems found in \cite{Goebel2006, Goebel2009}. A subset $E\subset\mathbb{R}_{\geq 0}\times\mathbb{N}$ is a  \textit{hybrid time domain}, % \cite{Goebel2006, Goebel2004},
        if it is a union of finitely or infinitely many intervals of the form $[t_j ,t_{j+1}]\times\{j\}$ where $0=t_0\leq t_1\leq t_2\leq...$,  with the last interval being possibly of the form $[t_j ,t_{j+1}]\times\{j\}$ or $[t_j ,\infty)\times\{j\}$. Let $\rightrightarrows$ denote a set-valued mapping. A general model of a hybrid system $\mathcal{H}$ takes the form:
            \begin{equation}\label{Hybrid:general}
            \mathcal{H}\left\{\begin{array}{l}
            \hspace{.25cm}\dot x\in F(x),\hspace{.5cm}x\in C\\
            x^+\in G(x), \hspace{.5cm}x\in D
            \end{array}\right.
            \end{equation}
        where the \textit{flow map}, $F: \mathbb{R}^n\rightrightarrows\mathbb{R}^n$ governs continuous flow of $x\in\mathbb{R}^n$, the \textit{flow set} $C\subset\mathbb{R}^n$ dictates where the continuous flow could occur. The \textit{jump map}, $G: \mathbb{R}^n\rightrightarrows\mathbb{R}^n$, governs discrete jumps of the state $x$, and the \textit{jump set} $D\subset\mathbb{R}^n$ defines where the discrete jumps are permitted. Note that the state $x\in\mathbb{R}^n$ could possibly include both continuous and discrete components. A \textit{hybrid arc} is a function $x: \textrm{dom} \:x\to\mathbb{R}^n$, where $\textrm{dom}\:x$ is a hybrid time domain and, for each fixed $j$, $t\mapsto x(t,j)$ is a locally absolutely continuous function on the interval $I_j=\{t: (t,j)\in\textrm{dom}\:x\}$.
        %%%%%%%%%%%%%%%%%%%%%%%%%%%%%%%%%%%%%%%%%%%%%%%%%%%%%%%%
\section{Problem Formulation}\label{section::problem}
Consider the following dynamics of an accelerated rigid body
\begin{equation}\label{model}\left\{
\begin{array}{l}
\dot v=ge_3+Rb_a,\\
\dot{R}=R[\omega]_\times,
\end{array}
\right.
\end{equation}
where $R\in SO(3)$ is the attitude matrix describing the orientation of a body-attached frame with respect to the inertial frame, $\omega\in\mathbb{R}^3$ is the rigid body's angular velocity expressed in the body-attached frame, $v\in\mathbb{R}^3$ is the inertial linear velocity of the rigid body, $g$ is the acceleration due to gravity, $e_3 = [0, 0, 1]^\top$ and $b_a\in\mathbb{R}^3$ is the body-frame ``apparent acceleration", capturing all non-gravitational forces applied to the vehicle expressed in the body frame.

Assume that the following measurements are available :
\begin{itemize}
\item Linear velocity $v$, which may be obtained using a GPS.
\item Magnetometer measurements $b_m$ of the (constant and known) earth magnetic field $r_m$ expressed in the body frame such that $b_m=R^\top r_m$.
\item Accelerometer measurements $b_a$ of the apparent acceleration $r_a:=\dot v-ge_3$ expressed in the body frame such that $b_a=R^\top r_a$.
\item Gyroscope measurements $\omega_y$ of the angular velocity vector $\omega$ such that $\omega_y=\omega+b_\omega$ and $b_\omega\in\mathbb{R}^3$ is a constant gyro bias.
\end{itemize}
Moreover, the following realistic assumptions (constraints) are placed in order to carry out our stability analysis.
\begin{assumption}[Observability condition]\label{assumption::obsv}
There exists a constant $c_0>0$ such that $\|r_m\times r_a(t)\|/\|r_m\|\|r_a(t)\|\geq c_0$ for all $t\geq 0$.
\end{assumption}
\begin{assumption}\label{assumption::bounded_ra}
There exist constants $c_1,c_2,c_3>0$ such that $c_1\leq\|r_a(t)\|\leq c_2$ and $\|\dot r_a(t)\|\leq c_3$ for all $t\geq 0$.
\end{assumption}
\begin{assumption}\label{assumption::bounded_bw}
There exists constants $c_4,c_5>0$ such that $\|\omega(t)\|\leq c_4$ and $\|b_\omega\|\leq c_5$ for all $t\geq 0$.
\end{assumption}
Assumption \ref{assumption::obsv} is a standard (uniform) observability condition in attitude estimation problems. It is guaranteed if the time-varying apparent acceleration $r_a(t)$ is non-vanishing and is \textit{always} not collinear to the constant magnetic field vector $r_m$. Note that $r_a(t)=0$ corresponds  the rigid body being in a free-fall case ($\dot v=ge_3$) which is not likely under normal flight conditions. Assumptions \ref{assumption::bounded_ra} and \ref{assumption::bounded_bw} impose some realistic constraints on the systems trajectory.

Our objective is to design a nonlinear observer that combines all the measurements/information available (as described above) and provide exponentially stable attitude estimates on $SO(3)$ while compensation and estimating the unknown bias vector $b_\omega$. 	

\section{Discussions of Previous Works}
Perhaps the first (invariant) nonlinear attitude observer \textit{with} gyro bias compensation for accelerated vehicles has been proposed in \cite{Martin2008} but only with local stability and convergence analysis. An attitude observer \textit{without} gyro bias compensation has been proposed  in \cite{Hua2010} with proven semi-global exponential stability. It has the following structure
\begin{equation}
\label{Hua2010}
\left\{
\begin{array}{l}
\dot{\hat{v}}=ge_3+\hat Rb_a+k_v\sigma_v,\\
\dot{\hat{R}}=\hat R[\omega_y+k_R\sigma_R]_\times,
\end{array}
\right.
\end{equation}
with $k_v,k_R>0$ and the correction terms $\sigma_v$ and $\sigma_R$ being defined, for $\rho_1,\rho_2>0$, as follows
\begin{align}
\label{sigmav::Hua2010}
\sigma_v&=v-\hat v,\\
\label{sigmaR::Hua2010}
\sigma_R&=\rho_1(b_m\times\hat R^\top r_m)+\rho_2(b_a\times\hat R^\top(v-\hat v)),
\end{align}
Defining the estimation errors $\tilde R=R\hat R^\top$ and $\tilde v=v-\hat v$, the interconnection of the observer \eqref{Hua2010} with the system model \eqref{model} leads to the following closed loop system
\begin{align}
\label{closed::Hua1}
\dot{\tilde R}&=-k_R\rho_1\tilde R[(\tilde R^\top r_m\times r_m)]_\times-g_1(\tilde v),\\
\label{closed::Hua2}
\dot{\tilde v}&=-k_v\tilde v+g_2(\tilde R),
\end{align}
where $g_1(\tilde v)=k_R\rho_2\tilde R[(\tilde R^\top r_a\times\tilde v)]_\times$ and $g_2(\tilde R)=(I-\tilde R^\top)r_a$. The reduced attitude error system \eqref{closed::Hua1} when $\tilde v=0$ corresponds to the closed loop system of a nonlinear complementary filter \cite{Mahony2008} that uses one \textit{single} vector measurement $b_m$ of the magnetic field. Note that the best result one can achieve, using a single vector measurement (without any persistency of excitation condition), is to drive the \textit{reduced} attitude error to zero. Interestingly, however, it is shown in \cite{Hua2010} that for all initial conditions such that $|\tilde R|_I^2<1$, there exists $\underline{k}_v>0$ such that for all $k_v>\underline{k}_v$, the equilibrium point $(\tilde v,\tilde R)=(0,I)$ of the closed loop system \eqref{closed::Hua1}-\eqref{closed::Hua2} is exponentially stable. The dynamic system \eqref{closed::Hua2} of the velocity error $\tilde v$ which feeds the attitude estimation error dynamics through the term $g_1(\tilde v)$ has permitted to drive the \textit{full} attitude error to zero.

Another velocity-aided attitude observer has been introduced in \cite{andrew2011velocity-aided} which takes the same form as \eqref{Hua2010} with the following correction terms
\begin{align}
\label{sigmav::Roberts2011}
\sigma_v&=v-\hat v+\frac{k_R}{k_v^2}\hat R[\sigma_R]_\times b_a,\\
\label{sigmaR::Roberts2011}
\sigma_R&=\rho_1(b_m\times\hat R^\top r_m)+\rho_2(b_a\times\hat R^\top\hat r_a),\\
\label{hatra}
\hat r_a&=k_v(v-\hat v)+\hat Rb_a.
\end{align}
The main difference between the observer of \cite{Hua2010} and that of \cite{andrew2011velocity-aided} is the additional term proportional to $\hat R[\sigma_R]_\times b_a$ in the velocity correction term $\sigma_v$. This has facilitated to obtain an asymptotic ``estimate" of the apparent acceleration vector $r_a(t)$ given by $\hat r_a$ in \eqref{hatra}. In fact, by introducing the estimation error $\tilde r_a=r_a-\hat r_a$, the closed loop system can be written as
\begin{align}
\nonumber
\dot{\tilde R}&=-k_R\tilde R[\rho_1(\tilde R^\top r_m\times r_m)+\rho_2(\tilde R^\top r_a\times r_a)]_\times-g_1(\tilde r_a),\\
\label{closed::andrew}
\dot{\tilde r}_a&=-k_v\tilde r_a+g_2(\tilde R),
\end{align}
where $g_1(\tilde r_a)=k_R\rho_2\tilde R[(\tilde R^\top r_a\times\tilde r_a)]_\times$ and $g_2(\tilde R)=(I-\tilde R^\top)\dot{r}_a$. In contrast to the closed loop system \eqref{closed::Hua1}-\eqref{closed::Hua2} of the observer in \cite{Hua2010}, the reduced attitude error system \eqref{closed::andrew} when $\tilde r_a=0$ corresponds to the closed loop system of a nonlinear complementary filter \cite{Mahony2008} that uses \textit{two} body-frame vector measurements, $r_m$ (the magnetic field) and $r_a$ (the apparent acceleration), which can be shown to be almost globally asymptotically stable (assuming $\tilde r_a=0$). Therefore, the closed loop system \eqref{closed::andrew} can be seen as  an interconnection of two exponentially stable systems. The first (attitude system) is semi-globally exponentially stable and almost globally asymptotically stable and the second ($\tilde r_a$ subsystem) is globally exponentially stable. Moreover, both subsystems can be shown to have an ISS property inside some region \cite{berkane2016design2}. Therefore, under the small gain theorem for ISS systems, the interconnection can be shown to be asymptotically stable as well. This small gain condition is reflected by the condition on the gain $k_v$ and the initial attitude state found in \cite{andrew2011velocity-aided} although derived using a direct Lyapunov-based proof. To put all together, the discussions of this section showed that, at the cost of an additional correction term in the velocity estimation dynamics compared to \cite{Hua2010}, the observer proposed in \cite{andrew2011velocity-aided} is able to estimate the apparent acceleration $r_a(t)$ while also estimating the attitude matrix $R(t)$. Moreover, the closed loop system results in a nice interconnection of two exponentially stable systems. However, both observers fail to provide compensation and/or estimation of the unavoidable gyro bias $b_\omega$ in the angular velocity measurements.
\section{A Velocity-aided Attitude Observer With Gyro-Bias Estimation}
In this section, we provide a solution to the attitude estimation problem of accelerated rigid body systems using GPS and IMU measurements as formulated in Section \ref{section::problem}. We propose the following attitude observer on $SO(3)$ with gyro-bias estimation:
\begin{equation}
\label{observer}
\left\{
\begin{array}{l}
\dot{\hat{v}}=ge_3+\hat Rb_a+k_v\sigma_v,\\
\dot{\hat{R}}=\hat R[\omega_y-\hat b_\omega+k_R\sigma_R]_\times,\\
\dot{\hat{b}}_{\omega}=\mathrm{Proj}\big(\hat b_\omega,-k_b\sigma_R\big),
\end{array}
\right.
\end{equation}
with $k_v,k_R,k_b>0$ and the correction terms $\sigma_R$ and $\sigma_v$ are similar to \eqref{sigmav::Roberts2011}-\eqref{hatra}. The projection function $\mathrm{Proj}$ satisfies the following properties \cite{marino1998robust}:
            \begin{itemize}
            \item [P1.] $\|\hat b_\omega(t)\|\leq c_5,\;\forall t\geq 0$,
            \item [P2.] $(\hat b_\omega-b_\omega)^\top\mathrm{Proj}(\mu,\hat b_\omega)\leq(\hat b_\omega-b_\omega)^\top\mu,$, %\vspace{0.04 in}\\
            \item [P3.] $\|\mathrm{Proj}(\mu,\hat b_\omega)\|\leq\|\mu\|$.
            \end{itemize}
 The following theorem is the first main result of the paper.
\begin{theorem}\label{theorem1}
Consider the interconnection of the rigid body dynamics \eqref{model} with the attitude observer \eqref{observer} where Assumptions \ref{assumption::obsv}-\ref{assumption::bounded_bw} are satisfied. For each $0<\varepsilon_R<1$ and for all initial conditions $|\tilde R(0)|_I\leq\varepsilon_R$ and $\|\tilde r_a(0)\|\in\mathbb{R}^3$, there exist gains $\underline{k_v},\underline{k_R}>0$ such that, for all $k_v>\underline{k_v}$ and $k_R\geq\underline{k_R}$, the equilibrium point $(\tilde R,\tilde b_\omega,\tilde r_a)=(I,0,0)$ is exponentially stable.
\end{theorem}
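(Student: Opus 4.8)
The plan is to derive the closed-loop error dynamics, construct a composite time-varying Lyapunov function whose bias cross-term is annihilated by the projection law, and then close a small-gain type argument by taking the gains large. I would introduce the errors $\tilde R = R\hat R^\top$, $\tilde b_\omega = \hat b_\omega - b_\omega$ and $\tilde r_a = r_a - \hat r_a$ and differentiate. Since $\omega_y - \hat b_\omega = \omega - \tilde b_\omega$, the attitude error obeys $\dot{\tilde R} = \tilde R[\hat R(\tilde b_\omega - k_R\sigma_R)]_\times$. Using $b_m = R^\top r_m$, $b_a = R^\top r_a$ and $\hat R R^\top = \tilde R^\top$, a direct computation gives $\hat R\sigma_R = \Gamma(\tilde R) - \rho_2(\tilde R^\top r_a\times\tilde r_a)$, where $\Gamma(\tilde R) := \rho_1(\tilde R^\top r_m\times r_m) + \rho_2(\tilde R^\top r_a\times r_a)$. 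Hence the new closed loop is exactly \eqref{closed::andrew} augmented by the single extra term $\tilde R[\hat R\tilde b_\omega]_\times$ in $\dot{\tilde R}$ (which also propagates into $\dot{\tilde r}_a$), together with $\dot{\tilde b}_\omega = \mathrm{Proj}(\hat b_\omega, -k_b\sigma_R)$.

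Next I would propose the time-varying Lyapunov candidate
\[
V = \lls M(t),\, I - \tilde R\ggs + \tfrac{\mu_1}{2}\|\tilde r_a\|^2 + \tfrac{1}{2k_b}\|\tilde b_\omega\|^2, \qquad M(t) = \rho_1 r_m r_m^\top + \rho_2 r_a(t)r_a(t)^\top,
\]
with a weight $\mu_1>0$ fixed later. The two algebraic facts I would use are the trace identity $\mathrm{tr}([x]_\times Y) = -2x^\top\psi(Y)$ and the gradient identity $\psi(M\tilde R) = \tfrac12\Gamma(\tilde R)$. The former turns the $-k_R\tilde R[\Gamma]_\times$ part of $\dot{\tilde R}$ into the dominant dissipation $-k_R\|\Gamma(\tilde R)\|^2$; the latter shows the bias contribution $\tilde R[\hat R\tilde b_\omega]_\times$ to $\dot V$ equals $+\,\tilde b_\omega^\top\hat R^\top\Gamma(\tilde R)$. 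On the other side, property \emph{P2} of the projection yields $\tfrac{1}{k_b}\tilde b_\omega^\top\mathrm{Proj}(\hat b_\omega,-k_b\sigma_R)\leq -\tilde b_\omega^\top\sigma_R = -\tilde b_\omega^\top\hat R^\top\Gamma(\tilde R) + \rho_2\,\tilde b_\omega^\top\hat R^\top(\tilde R^\top r_a\times\tilde r_a)$, so the indefinite $\tilde b_\omega^\top\hat R^\top\Gamma(\tilde R)$ terms cancel \emph{exactly}, leaving only a benign $\tilde b_\omega$--$\tilde r_a$ cross term. This is the clean, decisive step: the projection-based adaptation is precisely what removes the bias channel from the leading-order balance.

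Then I would bound what remains. Under Assumptions \ref{assumption::obsv}--\ref{assumption::bounded_ra} one establishes a coercivity estimate $\|\Gamma(\tilde R)\|^2\geq\alpha\,|\tilde R|_I^2$ on the region $|\tilde R|_I\leq\varepsilon_R<1$, with $\alpha>0$ depending on $c_0,c_1,c_2$, while $\lls M,I-\tilde R\ggs$ is sandwiched between positive multiples of $|\tilde R|_I^2$. The time variation $\dot M$ (controlled by $\|\dot r_a\|\leq c_3$), the coupling terms $g_1(\tilde r_a)$ and $g_2(\tilde R)$, and the leftover $\tilde b_\omega$--$\tilde r_a$ term are each estimated by Young's inequality as quadratic cross terms in $(|\tilde R|_I,\|\tilde r_a\|,\|\tilde b_\omega\|)$. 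Collecting everything, $\dot V$ is majorized by a quadratic form; choosing $k_R\geq\underline{k_R}$ to dominate the attitude coupling and $k_v>\underline{k_v}$ to dominate the $\tilde r_a$ coupling renders this form negative definite, giving $\dot V\leq -c\,V$ and hence the claimed exponential decay.

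The delicate point, which I expect to be the main obstacle, is reconciling the semi-global freedom in $\tilde r_a$ with the local restriction $|\tilde R(0)|_I\leq\varepsilon_R$: an arbitrarily large $\tilde r_a(0)$ injects a large transient into $\dot{\tilde R}$ that could drive $\tilde R$ out of the coercivity region before the coupling decays. One must therefore prove forward invariance of an appropriate sublevel set of $V$ (equivalently, that $|\tilde R|_I$ never reaches the threshold where $\|\Gamma(\tilde R)\|^2\geq\alpha|\tilde R|_I^2$ fails), which forces $k_v$ large enough that $\tilde r_a$ decays quickly relative to the attitude dynamics --- in effect the same small-gain condition invoked for \eqref{closed::andrew}, now complicated by the time-varying weight $M(t)$ and the extra bias channel. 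Once invariance is secured, the Lyapunov estimate above closes and exponential stability of $(\tilde R,\tilde b_\omega,\tilde r_a)=(I,0,0)$ follows.
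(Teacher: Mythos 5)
Your error dynamics and your diagnosis of the delicate point both match the paper: the paper likewise obtains $\dot{\tilde R}=\tilde R[-2k_R\psi(A\tilde R)-\hat R\tilde b_\omega+k_R\rho_2(\tilde Rr_a\times\tilde r_a)]_\times$ (your $\Gamma(\tilde R)$ is $2\psi(A\tilde R)$), and it proves forward invariance of $\{|\tilde R|_I\le\varepsilon_R\}$ by exactly the time-scale comparison you anticipate: a lower bound $\underline{t}_R$ on the escape time of $\tilde R$ versus an upper bound $\bar t_a$ on the time for $\tilde r_a$ to enter a ball of radius $B_a/k_R$, with $k_v$ chosen so that $\bar t_a\le\underline{t}_R$ and $k_R$ chosen so that $|\tilde R|_I$ strictly decreases at the boundary $|\tilde R|_I=\varepsilon_R$. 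The genuine gap is in your central Lyapunov step.

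The cancellation you call the "clean, decisive step" is precisely what prevents your candidate $V$ from working. After using P2 to cancel $\tilde b_\omega^\top\hat R^\top\Gamma(\tilde R)$, your $\dot V$ contains \emph{no} negative term in $\tilde b_\omega$ at all: the bias enters your $V$ only through $\frac{1}{2k_b}\|\tilde b_\omega\|^2$, and the projection law supplies no dissipation of its own. What survives is of the form
\[
\dot V\le -c_1'|\tilde R|_I^2-\mu_1 k_v\|\tilde r_a\|^2+c_2'\,|\tilde R|_I\|\tilde r_a\|+c_3'\,\|\tilde b_\omega\|\|\tilde r_a\|,
\]
where the last term collects the leftovers you dismissed as benign, namely $\rho_2\tilde b_\omega^\top\hat R^\top(\tilde R^\top r_a\times\tilde r_a)$ from the adaptation mismatch and $\mu_1\tilde r_a^\top\hat R[b_a]_\times\tilde b_\omega$ from $\dot{\tilde r}_a$. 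A symmetric quadratic form in $(|\tilde R|_I,\|\tilde r_a\|,\|\tilde b_\omega\|)$ whose $\tilde b_\omega$-diagonal entry is zero while its $\tilde b_\omega$--$\tilde r_a$ off-diagonal entry is nonzero is indefinite (the corresponding $2\times 2$ principal minor is $-(c_3')^2/4<0$), no matter how large $k_R$ and $k_v$ are; even if that coupling vanished you would only get negative \emph{semi}definiteness. Hence $\dot V\le -cV$ is unattainable with your $V$, and exponential stability of the full state $(\tilde R,\tilde b_\omega,\tilde r_a)$ does not follow. This is exactly why the paper's Lyapunov function \eqref{V} carries the additional cross term $\mu\,\tilde b_\omega^\top\hat R^\top\psi(\tilde R)$: differentiating it using $\dot{\psi}(\tilde R)=E(\tilde R)\left(-\hat R(\tilde b_\omega+k_R\sigma_R)\right)$ with $E(\tilde R)=\frac{1}{2}(\mathrm{tr}(\tilde R)I-\tilde R)$ produces the missing $-\|\tilde b_\omega\|^2$ (up to terms weighted by $|\tilde R|_I$, which are absorbed inside the invariant set $\mho(\varepsilon_R)$), at the price that $V$ is no longer obviously positive definite --- whence the sandwich matrices $P_1,P_2$ and the three blocks $P_{12},P_{13},P_{23}$ in \eqref{dV} that the gain conditions must render positive definite. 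Your argument needs this cross term, or some equivalent mechanism that injects strict dissipation into the bias channel, before it can close.
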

\begin{proof}
The attitude error dynamics are given by
\begin{align}\nonumber
\dot{\tilde R}&=\dot R\hat R^\top-R(\dot{\hat R})^\top=R[\omega]_\times\hat R-R[\omega_y-\hat b_\omega+k_R\sigma_R]_\times\hat R,\\
\label{dR0}
					&=\tilde R[\hat R(-k_R\sigma_R-\tilde b_\omega)]_\times.
\end{align}
On the other hand, it can be shown that $\sigma_R$ defined in \eqref{sigmaR::Roberts2011}-\eqref{hatra} is written as
$
\sigma_R=2\hat R^\top\psi(A\tilde R)-\rho_2(b_a\times\hat R^\top\tilde r_a)),
$
with $A(t)=\rho_1r_mr_m^\top+\rho_2r_a(t)r_a(t)^\top$ where the fact that $\psi(A\tilde R)=\frac{1}{2}\hat R\rho_1(b_m\times\hat R^\top r_m)+\frac{1}{2}\hat R\rho_2(b_a\times\hat R^\top r_a)$ has been used. This results in the following attitude and bias errors dynamics
\begin{align}
\label{dR}
\dot{\tilde R}&=\tilde R[-2k_R\psi(A\tilde R)-\hat R\tilde b_\omega+k_R\rho_2(\tilde Rr_a\times\tilde r_a)]_\times,\\
\label{db}
\dot{\tilde b}_\omega&=\mathrm{Proj}\big(\hat b_\omega,k_b\sigma_R\big).
\end{align}
Moreover, the error dynamics of $\tilde r_a$ are obtained as follows
\begin{align}\nonumber
\dot{\tilde r}_a&=\dot r_a-k_v(\dot v-\dot{\hat v})-(\dot{\tilde R})^\top r_a\\
					  &=-k_v\tilde r_a+(I-\tilde R^\top)\dot r_a
+\hat R[b_a]_\times\tilde b_\omega.
\label{dra}
\end{align}
Note hat all the error signals involved in the closed loop system \eqref{dR}-\eqref{dra} are \textit{a priori} bounded. In fact, the attitude error state $\tilde R$ is bounded by the compactness of $SO(3)$. The bias estimation error is bounded thanks to the projection mechanism. Let $c_b>0$ be an upper bound on $\|\tilde b_\omega(t)\|$ for all $t\geq 0$. Moreover, in view of the fact that $\|I-\tilde R\|_F^2=8|\tilde R|_I^2$ and using Assumption 1, it follows that
\begin{align}\label{dra2}
\dot{\tilde r}_a&\leq-k_v\tilde r_a+(\sqrt{8}c_3+c_2c_b),
\end{align}
which shows that $\tilde r_a$ cannot grow unbounded due to the presence of the negative term $-k_v\tilde r_a$. Our next goal is to prove that for all $0<\epsilon_R<1$ and under some conditions on $k_v$ and $k_R$, the set $\mho(\varepsilon_R)=\{\tilde R\in SO(3)\mid |\tilde R|_I\leq\epsilon_R\}$ is forward invariant. Integrating \eqref{dra2} and using the comparison lemma, one obtains
\begin{align}\label{ra2}
\|\tilde r_a(t)\|\leq e^{-k_vt}\|\tilde r_a(0)\|+\frac{c_a}{k_v}(1- e^{-k_vt}),\quad\forall t\geq 0,
\end{align}
with $c_a=\sqrt{8}c_3+c_2c_b$. It can be verified from \eqref{ra2} that the upper bound on $\|\tilde r_a(t)\|$ is either increasing from $\|\tilde r_a(0)\|$ to $c_a/k_v$ (if $\|\tilde r_a(0)\|\leq c_a/k_v$), or decreasing from $\|\tilde r_a(0)\|$ to $c_a/k_v$ (if $\|\tilde r_a(0)\|\geq c_a/k_v$). Therefore, in all cases, for all $\varepsilon_a>0$ and choosing $k_v>c_a/(\|\tilde r_a(0)\|+\varepsilon_a)$, one has $\|\tilde r_a(t)\|\leq \|\tilde r_a(0)\|+\varepsilon_a$ for all $t\geq 0$. Under this condition, let us compute the minimum time $t_R$ necessary for $\tilde R(t)$ to go, in the worst case senario, outside the set $\mho(\varepsilon_R)$ (starting from $\tilde R(0)\in\mho(\varepsilon_R)$). The time derivative of $|\tilde R|_I^2=\mathrm{tr}(I-\tilde R)/4$, in view of \eqref{dR}, satisfies
\begin{align}
\nonumber
&\frac{d}{dt}|\tilde R|_I^2=-\mathrm{tr}(\dot{R})/4\\
\nonumber
&=-\mathrm{tr}(\mathbb{P}_a(\tilde R)[-2k_R\psi(A\tilde R)-\hat R\tilde b_\omega+k_R\rho_2(\tilde Rr_a\times\tilde r_a)]_\times)/4\\
\nonumber
&=-k_R\psi(\tilde R)^\top\psi(A\tilde R)-\frac{1}{2}\psi(\tilde R)^\top(\hat R\tilde b_\omega-k_R\rho_2(\tilde Rr_a\times\tilde r_a))\\
&=-4k_R\lambda_{\min}^{\bar A}|\tilde R|_I^2(1-|\tilde R|_I^2)+|\tilde R|_I\|\tilde b_\omega\|+k_R\rho_2c_2|\tilde R|_I\|\tilde r_a\|
									  \label{dR2}
\end{align}
with $\bar A=\frac{1}{2}(\mathrm{tr}(A)-A)$, where we used the following facts (see \cite{berkane2016design} and identities therein) $\mathrm{tr}([u]_\times[v]_\times)=-2u^\top v$, $\mathbb{P}_a(\tilde R)=[\psi(\tilde R)]_\times, \psi(\tilde R)^\top\psi(A\tilde R)=\psi(\tilde R)^\top\bar A\psi(\tilde R)$ and $\|\psi(\tilde R)\|^2=4|\tilde R|_I^2(1-|\tilde R|_I^2)$. It follows that
\begin{align*}
\frac{d}{dt}|\tilde R|_I^2&\leq|\tilde R|_I\|\tilde b_\omega\|+k_R\rho_2c_2|\tilde R|_I\|\tilde r_a\|,\\
									&\leq c_b+k_R\rho_2c_2(\|\tilde r_a(0)\|+\varepsilon_a).
\end{align*}
In view of the above inequality on the velocity of $|\tilde R|_I^2$, it can be deduced that the minimum time necessary for $\tilde R(t)$ to go outside the set $\mho(\varepsilon_R)$ satisfies
$$
t_R\geq\underline{t}_R=\frac{\varepsilon_R^2-|\tilde R(0)|_I^2}{c_b+k_R\rho_2c_1(\|\tilde r_a(0)\|+\varepsilon_a)}.
$$
Since we have a knowledge about the minimum time necessary for the attitude error to go outside the set $\mho(\varepsilon_R)$, it is possible to prevent such a scenario by imposing some (high gain) conditions on the gains $k_v$ and $k_R$. First, we start by finding a minimum gain on $k_v$ such that $\|\tilde r_a(t)\|\leq B_a/k_R$, for some $B_a>0$, for all $t\geq t_a$, and such that $t_a\geq t_R\geq 0$. Assume that $k_v\geq k_v^*>c_ak_R/B_r$ for some $k_v^*>0$. Note that if $\|\tilde r_a(0)\|\leq c_a/k_v$ then $\|\tilde r_a(t)\|\leq c_a/k_v<B_r/k_R$ for all $t\geq 0$. Otherwise, the minimum time $t_a$ necessary to enter the ball $\|\tilde r_a(t)\|\leq\frac{B_r}{k_R}$ satisfies
\begin{align}
t_a\leq\bar{t}_a=\frac{1}{k_v^*}\ln\left(\frac{\|\tilde r_a(0)\|-c_a/k_v^*}{B_r/k_R-c_a/k_v^*}\right).
\end{align}
Note that the value of $k_v^*$ can be arbitrary increased to make $\bar{t}_a$ arbitrary small. Let $k_v^*$ be chosen such that $\bar{t}_a\leq\underline{t}_R$. Hence, it this case, it is true that $\|\tilde r_a(t)\|\leq B_r/k_R$ for all $t\geq t_a$. Therefore, it follows from \eqref{dR2} that for all $t\geq t_a$ one has
\begin{align*}
\frac{d|\tilde R(t)|_I^2}{dt}&\leq-4k_R\lambda_{\min}^{\bar A}|\tilde R(t)|_I^2(1-|\tilde R(t)|_I^2)+c_b+\rho_2c_2B_a.
\end{align*}
Note that the matrix $\bar A$ is positive definite in view of Assumption \ref{assumption::obsv}. In fact, it can be easily verified that $\bar A=\frac{1}{2}(\mathrm{tr}(A)-A)=-\frac{1}{2}\rho_1[r_m]_\times^2-\frac{1}{2}\rho_2[r_a]_\times^2$ which is positive definite if $r_m$ and $r_a(t)$ are non-collinear for all times. Now assume that $|\tilde R(t)|_I=\varepsilon_R$ and $k_R>(c_b+\rho_2c_2B_a)/(4\lambda_{\min}^{\bar A}\varepsilon_R^2(1-\varepsilon_R^2))$ then one has
\begin{align*}
\frac{d}{dt}|\tilde R(t)|_I^2&\leq-4k_R\lambda_{\min}^{\bar A}\varepsilon_R^2(1-\varepsilon_R^2)+c_b+\rho_2c_2B_a<0.
\end{align*}
This implies that $|\tilde R(t)|_I$ is strictly decreasing whenever $|\tilde R(t)|_I=\varepsilon_R$. It follows from the continuity of the solution that $\tilde R(t)$ will never move outside the ball $\mho(\varepsilon_R)$ for all $t\geq t_a$. Recall also that $|\tilde R(t)|_I\leq\varepsilon_R$ for all $t\leq t_R$. Consequently, since $t_a\leq\bar t_a\leq\underline{t}_R\leq t_R$, one concludes that $|\tilde R(t)|_I\leq\varepsilon_R$ for all $t\geq 0$ under the following gain conditions
\begin{align}\label{kR1}
k_R&>\frac{c_b+\rho_2c_2B_a}{4\lambda_{\min}^{\bar A}\varepsilon_R^2(1-\varepsilon_R^2)},&\forall B_a>0.\\
k_v&>\max\left(\frac{c_a}{\|\tilde r_a(0)\|+\varepsilon_a},k_v^*\right),&\forall \varepsilon_a>0,
\end{align}
which implies that the set $\mho(\varepsilon_R)$ is forward invariant. Now, we are ready to prove the exponential stability. Consider the following Lyapunov function candidate
\begin{align}
\label{V}
V=|\tilde R|_I^2+\frac{\mu k_R}{2k_b}\tilde b_\omega^\top\tilde b_\omega+\mu\tilde b_\omega^\top\hat R^\top\psi(\tilde R)+\frac{1}{2}\tilde r_a^\top\tilde r_a,
\end{align}
where $\mu$ is some positive scalar. Using the fact that $\|\psi(\tilde R)\|\leq 2|\tilde R|_I$ and letting $z:=[z_1,z_2,z_3]^\top=[|\tilde R|_I,\|\tilde b_\omega\|,\|\tilde r_a\|]^\top$, it can be checked that $V$ satisfies the quadratic inequality $z^\top P_1z\leq V\leq z^\top P_2 z$ where the matrices $P_1$ and $P_2$ are given by
\begin{align*}
P_1=\begin{bmatrix}
1&-\mu&0\\
-\mu&\frac{\mu k_R}{2k_b}&0\\
0&0&\frac{1}{2}
\end{bmatrix},\quad P_2=\begin{bmatrix}
1&\mu&0\\
\mu&\frac{\mu k_R}{2k_b}&0\\
0&0&\frac{1}{2}
\end{bmatrix}.
\end{align*}
Let us compute the time derivative of the cross term $\mathfrak{X}=\tilde b_\omega^\top\hat R^\top\psi(\tilde R)$ along the trajectories of the closed-loop system. Using \cite[Lemma 1]{berkane2016design} and in view of \eqref{dR0} one obtains $\dot{\psi}(\tilde R)=E(\tilde R)(-\hat R(\tilde b_\omega+k_R\sigma_R))$ with $E(\tilde R)=\frac{1}{2}(\mathrm{tr}(\tilde R)I-\tilde R)$. Also, it can be checked that the following properties for $E(\tilde R)$ hold:
\begin{align}
&x^\top(I-E(\tilde R))x\leq 2|\tilde R|_I^2\|x\|^2,\\
&x^\top(I-E(\tilde R))y\leq (2|\tilde R|_I^2+\sqrt{2}|\tilde R|_I)\|x\|\|y\|,
\end{align}
for all $x,y\in\mathbb{R}^3$ and $\tilde R\in SO(3)$. Moreover, one has $\|\sigma_R\|\leq2\|\psi(A\tilde R)\|+\rho_2\|r_a\|\|\tilde r_a\|\leq 4\lambda_{\max}^{\bar A}|\tilde R|_I+\rho_2c_2\|\tilde r_a\|$. Consequently, it follows that
{\small
\begin{align*}\nonumber
				\dot{\mathfrak{X}}
                &=\tilde b_\omega^\top\hat R^\top E(\tilde R)\left(-\hat R(\tilde b_\omega+k_R\sigma_R)\right)-\tilde b_\omega^\top[\omega+\tilde b_\omega+k_R\sigma_R]_\times\\
                &\hat R^\top\psi(\tilde R)+\mathrm{Proj}\big(\hat b_\omega,k_b\sigma_R\big)^\top\hat R^\top\psi(\tilde R)\nonumber\\
                \nonumber
                    &\leq-\|\tilde b_\omega\|^2+\tilde b_\omega^\top\hat R^\top (I-E(\tilde R))\hat R\tilde b_\omega+k_R\tilde b_\omega^\top\hat R^\top (I-E(\tilde R))\hat R\sigma_R\\
                    &-k_R\tilde  b_\omega^\top\sigma_R+c_\omega\|\tilde b_\omega\|\big\|\psi(\tilde R)\big\|+(k_Rc_b+k_b)\|\sigma_R\|\big\|\psi(\tilde R)\big\|\\
                    \nonumber
                     &\leq-\|\tilde b_\omega\|^2-k_R\tilde  b_\omega^\top\sigma_R+2c_b^2|\tilde R|_I^2+k_Rc_b\big(2|\tilde R|_I^2+\sqrt{2}|\tilde R|_I\big)\|\sigma_R\|\\
                     &+2(k_b+k_Rc_b)|\tilde R|_I\|\sigma_R\|+2c_\omega\|\tilde b_\omega\||\tilde R|_I\nonumber\\
               &\leq-\|\tilde b_\omega\|^2-k_R\tilde  b_\omega^\top\sigma_R+(\alpha_1+k_R\alpha_2)|\tilde R|_I^2\\
               &+(\alpha_3+k_R\alpha_4)|\tilde R|_I\|\tilde r_a\|+2c_\omega\|\tilde b_\omega\||\tilde R|_I,
            \end{align*}
            }
such that $\alpha_1=2c_b^2+8\lambda_{\max}^{\bar A} k_b, \alpha_2=4\lambda_{\max}^{\bar A} c_b(4+\sqrt{2}), \alpha_3=2\rho_2c_2k_b$ and $\alpha_4=\rho_2c_2c_b(4+\sqrt{2})$.
Recall also that
\begin{align*}
\frac{1}{2}\frac{d}{dt}\|\tilde b_\omega\|^2&\leq \tilde b_\omega^\top\mathrm{Proj}\big(\hat b_\omega,k_b\sigma_R\big)\leq k_b\tilde b_\omega^\top\sigma_R\\
\frac{1}{2}\frac{d}{dt}\|\tilde r_a\|^2&\leq-k_v\|\tilde r_a\|^2+\sqrt{8}c_3|\tilde R|\|\tilde r_a\|+c_2\|\tilde b_\omega\|\|\tilde r_a\|.
\end{align*}
Consequently, in view of the above results, the time derivative of $V$ along the trajectories of the closed loop system satisfies
\begin{align}
\label{dV}
\dot V&\leq-z_{12}^\top P_{12}z_{12}-z_{13}^\top P_{13}z_{13}-z_{23}^\top P_{23}z_{23},
\end{align}
where $z_{ij}=[z_i,z_j]^\top$ and the matrices $P_{ij}$ are
\begin{align*}
P_{12}&=\begin{bmatrix}
k_R\big(2\lambda_{\min}^{\bar A}(1-\varepsilon_R^2)-\mu\alpha_2\big)-\mu\alpha_1&-(\frac{1}{2}+c_\omega\mu)\\
*&\frac{\mu}{2}
\end{bmatrix},\\
P_{13}&=\begin{bmatrix}
2k_R\lambda_{\min}^{\bar A}(1-\varepsilon_R^2)&-(\frac{k_R\rho_2c_2+\sqrt{8}c_3+\mu(\alpha_3+k_R\alpha_4)}{2})\\
*&\frac{k_v}{2}
\end{bmatrix},\\
P_{23}&=\begin{bmatrix}
\frac{\mu}{2}&-\frac{c_2}{2}\\
-\frac{c_2}{2}&\frac{k_v}{2}
\end{bmatrix}.
\end{align*}
Now, if we pick $\mu>0$ such that $\mu<\lambda_{\min}^{\bar A}(1-\varepsilon_R^2)/\alpha_2$ and choose the gains $k_R$ and $k_v$ such that
\begin{align*}
k_R&>\max\left\{2\mu k_b,\frac{2\alpha_1\mu^2+(1+2c_\omega\mu)^2}{2\mu\lambda_{\min}^{\bar A}(1-\varepsilon_R^2)}\right\},\\
k_v&>\max\left\{\frac{c_1^2}{\mu}, \frac{2(\frac{k_R\rho_2c_1}{2}+\sqrt{2}c_2+\frac{\mu(\alpha_3+k_R\alpha_4)}{2})^2}{k_R\lambda_{\min}^{\bar A}(1-\varepsilon_R^2)}\right\},
\end{align*}
then matrices $P_1, P_2, P_{12}, P_{13}$ and $P_{23}$ are all positive definite. The exponential stability immediately follows.
\end{proof}
\section{Hybrid Velocity-Aided Attitude Observer With Gyro-Bias Estimation}
The exponential stability of the proposed attitude observer \eqref{observer} is best described as semi global. Note that, for large initial conditions such that $|\tilde R(0)|_I^2\to 1$, the conditions of Theorem \ref{theorem1} require high gains for $k_R$ and $k_v$ which tend to infinity as the attitude error gets closer to $180^\circ$. Although, simulation results suggest that this might not be an issue in practice and that the derived high-gain conditions on the observer gains are only conservative, it is desirable from a theoretical point of view to remove these conditions. Hybrid observers have been proposed recently in \cite{lee2015observer,berkaneCDC2016observer} to overcome the topological obstruction for global attractivity on $SO(3)$. Our hybrid observer proposed in \cite{berkaneCDC2016observer} which is an extension to the nonlinear complementary filter \cite{Mahony2008} guarantees global exponential stability when using vector measurements of constant and known inertial vectors. In this section, we adopt a different approach to design a hybrid observer that is able to deal with ``time-varying" vector measurements, which is suitable for the problem dealt with in the present work.

First, let us define the following function
\begin{multline}
\Phi_0(\hat R,b_m,b_a,r_a)=3-\frac{r_m^\top\hat Rb_m}{\|b_m\|^2}-\frac{(r_m\times r_a)\hat R(b_m\times b_a)}{\|b_m\times b_a\|^2}\\-\frac{(r_m\times r_m\times r_a)\hat R(b_m\times b_m\times b_a)}{\|b_m\times b_m\times b_a\|^2}.
\end{multline}
For simplicity, if no argument is indicated for $\Phi_0$ then it should be understood that $\Phi_0\equiv\Phi_0(\hat R,b_m,b_a,r_a)$. Consider the following ``reset" rule for the attitude estimate $\hat R$
\begin{align}
\label{flow}
&\left\{
\begin{array}{l}
\dot{\hat{v}}=ge_3+\hat Rb_a+k_v\sigma_v,\\
\dot{\hat{R}}=\hat R[\omega_y-\hat b_\omega+k_R\sigma_R]_\times,\\
\dot{\hat{b}}_{\omega}=\mathrm{Proj}\big(\hat b_\omega,-k_b\sigma_R\big),
\end{array}
\right.&(\hat v,\hat R,\hat b_\omega)\in\mathcal{F},\\
&\left\{
\begin{array}{l}
\hat v^+=\hat v,\\
\hat R^+=\mathcal{R}_a(\pi,u)\hat R,\\
\hat b_\omega^+=\hat b_\omega,
\end{array}
\right.&(\hat v,\hat R,\hat b_\omega)\in\mathcal{J},
\label{jump}
\end{align}
where the flow set $\mathcal{F}$ and jump set $\mathcal{J}$ are defined as follows
\begin{align}
\mathcal{F}&=\{(\hat v,\hat R,\hat b_\omega):\;\Phi(\hat v,\hat R)\leq\delta\},\\
\mathcal{J}&=\{(\hat v,\hat R,\hat b_\omega):\;\Phi(\hat v,\hat R)\geq\delta\},
\end{align}
and $\Phi(\hat v,\hat R)=\Phi_0(\hat R,b_m,b_a,\hat Rb_a+k_v(v-\hat v))$ which is a measurable quantity. The correction terms $\sigma_v$ and $\sigma_R$ are similar to \eqref{sigmav::Roberts2011} and \eqref{sigmaR::Roberts2011}-\eqref{hatra}, respectively. The unit vector $u\in\mathbb{S}^2$ that appears in \eqref{jump} is defined as
\begin{align}
\label{u}
u=\mathrm{arg}\min_{i\in\{1,2,3\}}\Phi(\hat v,\mathcal{R}(\pi,u_i)\hat R),
\end{align}
where $\{u_1,u_2,u_3\}$ is any orthonormal basis of $\mathbb{R}^3$. We state our second main result of the paper.
\begin{theorem}\label{theorem2}
Consider the attitude kinematics with the hybrid attitude observer \eqref{flow}-\eqref{u} where Assumptions \ref{assumption::obsv}-\ref{assumption::bounded_bw} are satisfied. Assume that $\delta,\alpha>0$ such that $3+5\alpha/2<\delta<4-\alpha$ and $\alpha<2/7$. Then, there exist gains $\underline{k_v},\underline{k_R}>0$ such that, for all $k_v>\underline{k_v}$ and $k_R\geq\underline{k_R}$, the equilibrium point $(\tilde R,\tilde b_\omega,\tilde r_a)=(I,0,0)$ is globally exponentially stable.
\end{theorem}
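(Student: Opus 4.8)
The plan is to layer a hybrid argument on top of Theorem~\ref{theorem1}, letting the flow inherit the continuous-time analysis and letting the jump rule remove only the topological obstruction near $|\tilde R|_I=1$. The argument then splits into three pieces: reading the monitor $\Phi$ as a proxy for $|\tilde R|_I^2$, a purely geometric ``best of three axes'' lemma for the reset, and gluing the two together via the same time-scale separation already used in Theorem~\ref{theorem1}.

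First I would express $\Phi$ in terms of the attitude error. Substituting $b_m=R^\top r_m$, $b_a=R^\top r_a$ and using that the cross product is covariant under rotations ($R^\top x\times R^\top y=R^\top(x\times y)$), the three body directions $b_m$, $b_m\times b_a$, $b_m\times(b_m\times b_a)$ map, through $\hat R$, to $\tilde R^\top$ acting on the inertial triad $r_m$, $r_m\times r_a$, $r_m\times(r_m\times r_a)$, which is orthogonal by Assumption~\ref{assumption::obsv}. Hence, when $\hat r_a=r_a$, $\Phi=3-\sum_{k=1}^3 w_k^\top\tilde R^\top w_k=3-\mathrm{tr}(\tilde R)=4|\tilde R|_I^2$ for the normalized triad $w_k$. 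Carrying the discrepancy $\hat r_a=r_a-\tilde r_a$ through the two acceleration-dependent terms and bounding with Assumptions~\ref{assumption::obsv}--\ref{assumption::bounded_ra} gives $\Phi=4|\tilde R|_I^2+e$ with $|e|\le (2/c_0c_1)\|\tilde r_a\|$. As in Theorem~\ref{theorem1}, choosing $k_v$ large forces $\|\tilde r_a\|\le c_a/k_v$ after a short transient, so $|e|\le\alpha$ can be guaranteed.

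Next I would prove the reset lemma. Since $\mathcal{R}_a(\pi,u)=2uu^\top-I$ is symmetric, a reset gives $\tilde R^+=\tilde R\mathcal{R}_a(\pi,u)$ and $\mathrm{tr}(\tilde R^+)=2u^\top\tilde R u-\mathrm{tr}(\tilde R)$. For any orthonormal basis $\{u_1,u_2,u_3\}$ one has $\sum_i u_i^\top\tilde R u_i=\mathrm{tr}(\tilde R)$, hence $\max_i u_i^\top\tilde R u_i\ge\tfrac13\mathrm{tr}(\tilde R)$ and therefore $\min_i 4|\tilde R\mathcal{R}_a(\pi,u_i)|_I^2\le 4-\tfrac43|\tilde R|_I^2$. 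Propagating the $\le\alpha$ monitor error through the $\Phi$-based $\arg\min$ in \eqref{u} (two $\alpha$'s for the compared axes, one to return to $\Phi^+$) and using $4|\tilde R|_I^2\ge\delta-\alpha$ on $\mathcal{J}$ yields $\Phi^+\le 4-\tfrac13(\delta-\alpha)+3\alpha=4-\tfrac{\delta}{3}+\tfrac{10}{3}\alpha$, which is strictly below $\delta$ precisely when $\delta>3+\tfrac52\alpha$. The companion bound $\delta<4-\alpha$ places every attitude with $|\tilde R|_I$ near $1$ inside $\mathcal{J}$ and makes the flow set satisfy $|\tilde R|_I\le\varepsilon_R:=\sqrt{(\delta+\alpha)/4}<1$, while $\alpha<2/7$ is exactly the nonemptiness condition $3+\tfrac52\alpha<4-\alpha$ of the admissible $\delta$-interval. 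Thus a clean jump lands strictly inside $\mho(\varepsilon_R)$, where Theorem~\ref{theorem1}'s Lyapunov function $V$ applies, giving $\dot V\le-\lambda V$ along flows and forward invariance of $\mho(\varepsilon_R)$ (so no further jumps); after the small-$\tilde r_a$ regime is reached, at most one jump is needed and global exponential stability reduces to Theorem~\ref{theorem1}.

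The hard part is the transient. Because the reset changes $\hat R$ but not $\hat v$, the acceleration error jumps by an $O(c_2)$ amount, $\tilde r_a^+=\tilde r_a+(I-\mathcal{R}_a(\pi,u))\tilde R^\top r_a$, so right after any jump $\|\tilde r_a\|$ — hence the monitor error $e$ — is $O(1)$ rather than $\le\alpha$, and neither the reset lemma nor the flow estimate applies verbatim. I would control this with the two-time-scale argument of Theorem~\ref{theorem1}: pick $k_v\ge\underline{k_v}$ so large that, from any post-reset value, $\tilde r_a$ re-enters $\{\|\tilde r_a\|\le c_0c_1\alpha/2\}$ within a dwell time $\bar{t}_a$ strictly smaller than the minimum escape time $\underline{t}_R$ of $\mho(\varepsilon_R)$; this keeps the $O(c_2)$ kicks in $\tilde r_a$ from provoking a jump cascade, bounds the number of jumps uniformly, and rules out Zeno behaviour. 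The global certificate is then assembled in the hybrid framework of \cite{Goebel2009}: once $\tilde r_a$ is small, $V$ strictly decreases along flows and is non-increasing across the (finitely many, clean) jumps, so the transient jumps contribute only a bounded multiplicative constant; equivalently, the attitude loop is hybrid-exponentially-stable with $\tilde r_a$ as a fast ISS input closed by a small-gain/large-$k_v$ condition. I expect the genuinely delicate step to be this uniform bound on the number of transient jumps — certifying that the $O(c_2)$ resets of $\tilde r_a$ cannot sustain an indefinite sequence of jumps before the fast $\tilde r_a$ dynamics settle.
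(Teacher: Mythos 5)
Your first two steps coincide, modulo packaging, with the paper's own proof: the expansion $\Phi=4|\tilde R|_I^2+e$ with $|e|$ of order $\|\tilde r_a\|/(c_0^2c_1)$ and the large-$k_v$ argument forcing $|e|\le\alpha$ after a transient are exactly the paper's inequality \eqref{ineq_Phi}, and your best-of-three-axes lemma is the paper's identity $\sum_{i=1}^3\mathrm{tr}(I-\tilde R\mathcal{R}_a(\pi,u_i))=12-\mathrm{tr}(I-\tilde R)$ with the same averaging step and the same resulting thresholds $\delta>3+\frac{5}{2}\alpha$, $\delta<4-\alpha$, $\alpha<2/7$. The endgame differs: you argue that a clean jump lands strictly inside the flow set, that $\mho(\varepsilon_R)$ is then invariant, and that after the transient at most one jump occurs, reducing everything to Theorem \ref{theorem1}; the paper instead shows $V^+-V\le\frac13(3+5\alpha/2-\delta)+4\mu c_b<0$ for $\mu$ small enough and invokes the hybrid Lyapunov theorem of \cite[Theorem 1]{teel2013lyapunov}, so it never needs to count or exclude jumps. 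Note that your ``no further jumps'' step is not automatic as stated: invariance of $\mho(\varepsilon_R)$ with $\varepsilon_R^2=(\delta+\alpha)/4$ does not preclude jumps, because the jump set only requires $4|\tilde R|_I^2\ge\delta-\alpha<4\varepsilon_R^2$; the paper's decrease-at-jumps route sidesteps this counting problem entirely.

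The substantive point is the one you raise in your last paragraph, and you are right to worry about it. Since $\hat v^+=\hat v$ while $\hat R^+=\mathcal{R}_a(\pi,u)\hat R$, the quantity $\hat r_a=k_v(v-\hat v)+\hat Rb_a$ is discontinuous at jumps and $\tilde r_a^+=\tilde r_a+2(I-uu^\top)\tilde R^\top r_a$ is an $O(c_2)$ kick, exactly as you compute. The paper's proof silently ignores this: its bound $\Phi^+\le\frac13\bigl(\sum_i\mathrm{tr}(I-\tilde R\mathcal{R}_a(\pi,u_i))+3\alpha\bigr)$ applies the $|e|\le\alpha$ monitor bound to the \emph{post-jump} configurations, where the relevant acceleration error is the kicked one, and its computation of $V^+-V$ drops the term $\frac12\|\tilde r_a^+\|^2-\frac12\|\tilde r_a\|^2$ even though $V$ in \eqref{V} contains $\frac12\tilde r_a^\top\tilde r_a$. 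But your proposed repair (a dwell-time, two-time-scale argument with large $k_v$) cannot close the hole either, because the corruption enters at the jump instant itself, not only in the post-jump flow: each candidate value $\Phi(\hat v,\mathcal{R}_a(\pi,u_i)\hat R)$ in the selection rule \eqref{u} is evaluated with its own kicked error $\tilde r_{a,i}^+=\tilde r_a+2(I-u_iu_i^\top)\tilde R^\top r_a$, hence carries an error of order $c_2/(c_0^2c_1)$ that is independent of $k_v$ and $k_R$; no gain choice makes it $\le\alpha$, so neither your ``clean jump'' claim nor the attitude-decrease claim is actually established. The fix has to be structural rather than high-gain: modify the jump map so that $\hat r_a$ is continuous across jumps, e.g.\ $\hat v^+=\hat v+k_v^{-1}(I-\mathcal{R}_a(\pi,u))\hat Rb_a$ (equivalently, evaluate the candidates in \eqref{u} and the post-jump monitor with the pre-jump $\hat r_a$ frozen). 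With that modification $\tilde r_a^+=\tilde r_a$, the $\alpha$-bounds hold at jumps, the $\tilde r_a$ term in $V$ does not jump, and both your assembly and the paper's Lyapunov argument go through; without it, both your proof and the paper's share the same gap.
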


Before proceeding with the proof of the theorem, some remarks are in order. The innovative idea of the hybrid observer \eqref{flow}-\eqref{u} is to employ discrete transitions of the estimated attitude state $\hat R$ rather than transitions in the observer's correction term as in previous synergistic hybrid techniques \cite{lee2015observer,berkaneCDC2016observer}. This allows to deal with the problem of time-varying inertial as is the case of the problem at hand. In fact, rotating the attitude estimate by an angle of $180^\circ$ (which results in a rotation of $180^\circ$ for the attitude error $\tilde R=R\hat R^\top$ as well) in some specific direction $u$ whenever the current attitude error is close to $180^\circ$, results in a decrease in the attitude error. However, since measurements of the attitude error is not available to check how ``close" the error is to $180^\circ$, we use the measurable cost $\Phi(\hat v,\hat R)$ as a criterion. In fact, under the conditions of Theorem \ref{theorem2}, it will be shown in the proof of the theorem that when the attitude error is close to $180^\circ$, the cost $\Phi(\hat v,\hat R)$ exceeds certain pre-defined constant threshold $\delta$ which forces the observer \eqref{flow}-\eqref{u} to jump its state and therefore reducing the estimation error.

\begin{proof}
Let us show that there exist $0<\epsilon_R<1$ and an initial finite time $t_0\geq 0$ such that $|\tilde R(t)|_I^2\leq\epsilon_R$ during the flows of $\mathcal{F}$ for all $t\geq t_0$. In perfect conditions (noise-free), it can be verified that $\Phi_0(\hat R,b_m,b_a,r_a)=\mathrm{tr}(I-\tilde R)=4|\tilde R|_I^2$. Using the fact that $\hat Rb_a+k_v(v-\hat v)=r_a-\tilde r_a$, it can be verified that
\begin{multline*}
\Phi=\Phi_0+\frac{(r_m\times\tilde r_a)\hat R(b_m\times b_a)}{\|b_m\times b_a\|^2}+\\\frac{(r_m\times r_m\times\tilde r_a)\hat R(b_m\times b_m\times b_a)}{\|b_m\times b_m\times b_a\|^2}.
\end{multline*}
Therefore, in view of Assumptions \eqref{assumption::obsv}-\eqref{assumption::bounded_ra}, it follows that
\begin{align}
\Phi_0-\frac{2\|\tilde r_a\|}{c_1c_0^2}\leq\Phi\leq\Phi_0+\frac{2\|\tilde r_a\|}{c_1c_0^2}.
\end{align}
Assume that the gain $k_v$ is chosen such that $k_v>2c_a/\alpha c_1c_0^2$ for some $\alpha>0$. Hence, there exists a finite time $t_0\geq 0$ such that $\|\tilde r_a(t)\|\leq\alpha c_1c_0^2/2$ for all $t\geq t_0$. Therefore, the following holds
\begin{align}\label{ineq_Phi}
\Phi_0-\alpha\leq\Phi\leq\Phi_0+\alpha,\quad t\geq t_0.
\end{align}
Let $\tilde R\in\mathcal{R}_a(\pi,\mathbb{S}^2)$ which implies that $\Phi_0=4|\tilde R|_I^2=4$. Hence, making use of \eqref{ineq_Phi}, it follows that $\Phi\geq\Phi_0-\alpha=4-\alpha>\delta$. Consequently, one has $(\hat v,\hat R,\hat b_\omega)\in\mathcal{J}$. Since $|\tilde R|_I^2$ can not be equal $1$ inside the flow set $\mathcal{F}$, one concludes that there exist $0<\epsilon_R<1$ such that $|\tilde R(t)|_I\leq \epsilon_R<1$ for all $t\geq t_0$. We have just showed that the set $\mho(\epsilon_R)$ is forward invariant (starting from time $t_0\geq 0$) without requiring a condition on the initial states. The proof of exponential convergence from all initial conditions, during the flows of $\mathcal{F}$, follows the same lines as \eqref{V}-\eqref{dV} and here omitted for space limitation. It remains to show that the Lyapunov function $V$ used in \eqref{V} is strictly decreasing during the jumps of the hybrid observer \eqref{jump}. First, using the fact that $\mathcal{R}_a(\pi,u_i)=-I+2u_iu_i^\top$, one has
\begin{align*}
\sum_{i=1}^3\mathrm{tr}(I-\tilde R\mathcal{R}(\pi,u_i))&=3\mathrm{tr}(I+\tilde R)-2\sum_{i=1}^3\mathrm{tr}(\tilde Ru_iu_i^\top),\\
																	   &=12-\mathrm{tr}(I-\tilde R),
\end{align*}
where we have used the fact that $\{u_1,u_2,u_3\}$ is an orthonormal basis and hence $\sum_{i=1}^3u_iu_i^\top=1$. Moreover, in view of \eqref{u} and \eqref{ineq_Phi} and using the fact that $\Phi^+=\min_{i\in\{1,2,3\}}\Phi(\hat v,\mathcal{R}(\pi,u_i)\hat R)$, one obtains
\begin{align*}
\Phi^+&\leq\frac{1}{3}\sum_{i=1}^3\Phi(\hat v,\mathcal{R}(\pi,u_i)\hat R)\\
		  &\leq\frac{1}{3}\left(\sum_{i=1}^3\mathrm{tr}(I-\tilde R\mathcal{R}(\pi,u_i))+3\alpha\right)=4-\frac{4}{3}|\tilde R|_I^2+\alpha.
\end{align*}
Consequently, it follows using \eqref{ineq_Phi} and $|\tilde R^+|_I^2=\Phi_0^+/4$ that
\begin{align*}
|\tilde R^+|_I^2-|\tilde R|_I^2&\leq\frac{1}{4}\left(\Phi^++\alpha\right)-|\tilde R|_I^2\leq1-\frac{4}{3}|\tilde R|_I^2+\frac{\alpha}{2},\\
											 &\leq1-\frac{(\delta-\alpha)}{3}+\frac{\alpha}{2}=\frac{1}{3}(3+5\alpha/2-\delta).
\end{align*}
where we have also used the fact that $|\tilde R|_I^2=\Phi_0/4\geq(\Phi-\alpha)/4\geq(\delta-\alpha)/4$. Therefore, by letting the constant $\mu$ in \eqref{V} to satisfy $0<\mu\leq(\delta-5\alpha/2-3)/24c_b$, one deduces
\begin{align*}
&V^+-V=|\tilde R^+|_I^2-|\tilde R|_I^2+\mu\tilde b_\omega^\top\hat R^\top(\psi(\tilde R^+)-\psi(\tilde R))\\
		  &\leq\frac{1}{3}(3+5\alpha/2-\delta)+4\mu c_b\leq\frac{1}{6}(3+5\alpha/2-\delta)<0.
\end{align*}
The proof is complete by invoking \cite[Theorem 1]{teel2013lyapunov}.
\end{proof}

\section{Simulation results}
We provide simulation results that demonstrate the effectiveness of the proposed velocity-aided estimation schemes (hybrid and non-hybrid) with bias compensation. We consider a rigid body system evolving according to \eqref{model} with the following linear velocity and angular velocity vectors

{\small
\begin{align*}
v(t)=\begin{bmatrix}
2\cos(0.5t+0.5)\\
3.75\cos(1.25t+0.5)\\
0.5\cos(0.5t+0.5)
\end{bmatrix},\;\;\;
\omega(t)=\begin{bmatrix}
\sin(0.1t+\pi)\\
0.5\sin(0.2t)\\
0.1\sin(0.3t+\pi/3)
\end{bmatrix}.
\end{align*}
}
The true attitude is initialized at $R(0)=\mathcal{R}_a(\pi,[0,1,0]^\top)$ and the estimated attitude at $\hat R(0)=I$. We consider gyroscopic measurements of the angular velocity vector with a constant bias $b_\omega=[5,5,5]^\top\mathrm{(deg/s)}$. Also we consider an IMU equipped with a magnetometer and an accelerometer providing in the dody-frame, respectively,  measurements of the earth's magnetic field $r_m=[0.18, 0, 0.54]^\top$ and the apparent acceleration (unknown in the inertial frame). We implement the observer \eqref{observer} with the gains $k_v=\rho_1=\rho_2=1$, $k_R=2$ and $k_v=3$. The hybrid observer \eqref{flow}-\eqref{u} is also implemented with the same gains and parameters $u_i=e_i, i=1,2,3$ and $\delta=3.6$. The hybrid observer has switched at around $1.42\mathrm{s}$ to reduce the attitude error as shown in Figure \ref{figure::theta}. Although both observers are successful in estimating the attitude, the gyro bias and the unknown acceleration of the vehicle, this simulation showed that the hybrid observer exhibits a faster transient response.
\begin{figure}[h!]
\centering
\includegraphics[scale=0.35]{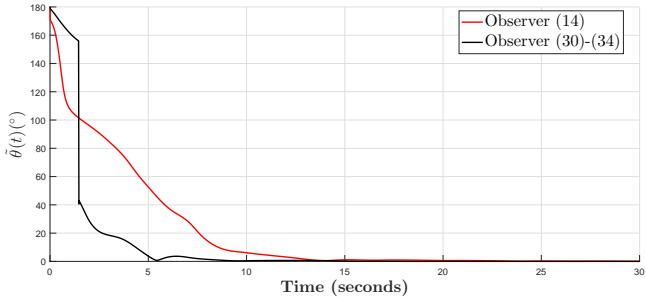}
\caption{Attitude estimation error angle in degrees.}
\label{figure::theta}
\end{figure}
\begin{figure}[h!]
\centering
\includegraphics[scale=0.35]{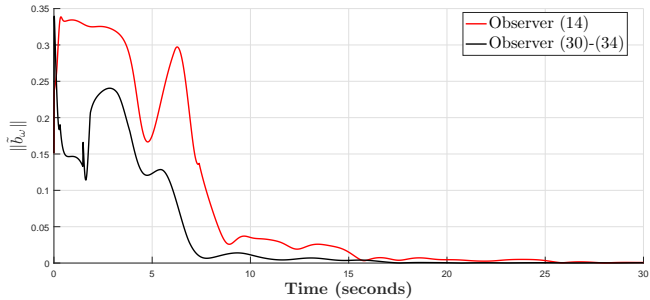}
\caption{Bias estimation error in rad/s.}
\label{figure::bias}
\end{figure}
\begin{figure}[h!]
\centering
\includegraphics[scale=0.35]{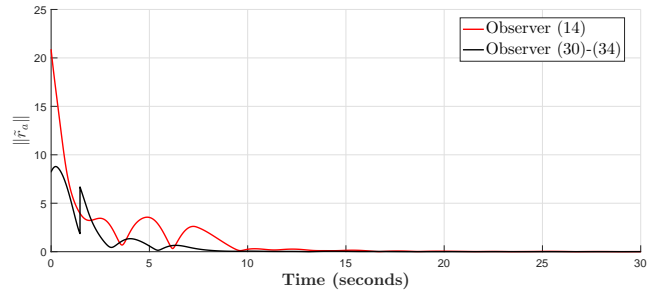}
\caption{Apparent acceleration estimation error in m/s.}
\label{figure::bias}
\end{figure}

\section{Conclusion}
We extended the velocity-aided attitude observer of \cite{andrew2011velocity-aided} with a projection-based gyro-bias estimation scheme. We proved that the origin of the closed-loop system is semi-globally exponentially stable. To the best of our knowledge, this is the first result providing semi-global exponential stability results for a velocity-aided attitude observer \textit{with gyro-bias estimation}. This extension is far from being trivial due to the fact that one of the reference vectors, namely $r_a$, is unknown and time-varying. Moreover, we proposed a new hybrid attitude observer scheme to enlarge the region of attraction of the observer, leading to global exponential stability results. The hybrid observer jumps its state to an attitude estimate which is guaranteed to be \textit{closer} to the true attitude after each jump. The performance of the proposed estimation schemes is illustrated by some simulation results.
\bibliographystyle{IEEETran}
\bibliography{IEEEabrv,Hybrid}
\end{document}